\newtheorem{thm}{Theorem}
\newtheorem{prop}[thm]{Proposition}
\newtheorem{lem}[thm]{Lemma}
\newcommand{\C}{{\mathbb{C}}}
\newcommand{\N}{\mathbb{N}}
\newcommand{\R}{{\mathbb{R}}}
\newcommand{\Z}{{\mathbb{Z}}}
\newcommand{\GL}{\operatorname{GL}}
\newcommand{\SL}{\operatorname{SL}}
\newcommand{\sgn}{\operatorname{sgn}}
\newcommand{\X}{\mathsf{X}}
\def\det{\text{det}}
\def\R{\mathbb{R}}
\def\Z{\mathbb{Z}}
\def\C{\mathbb{C}}
\def\N{\mathbb{N}}
\def\GL{\text{GL}}
\def\SL{\text{SL}}
\def\Pr{\text{Pr }}
\def\1{\mathbbm{1}}
\def\X{\mathsf{X}}
\def\<{\langle}
\def\>{\rangle}
\title[Tensor power asymptotics]{Tensor Power Asymptotics for Linearly Reductive Groups}
\author{Michael Larsen}
\address{ Department of Mathematics, Indiana University, Bloomington, IN 47405,
 U.S.A.}
\email{mjlarsen@iu.edu}
\thanks{The author was partially supported by NSF grant DMS-2401098 and the Simons Foundation.}
\begin{document}

\begin{abstract}
Given a finite-dimensional faithful representation $V$ of a
linearly reductive group $G$ over a field $K=\bar K$, 
we consider the growth of the number of irreducible factors of $V^{\otimes n}$ when $n$ is large.
We prove that there exist upper and lower bounds which are constant multiples of $n^{-u/2} (\dim V)^n$,
where $u$ is the dimension of any maximal unipotent subgroup of $G$.

\end{abstract}

\maketitle

Let $G$ be a linear algebraic group over an algebraically closed field $K$ of characteristic $p\ge 0$.
and $\rho\colon G\to \GL(V)$ denote a finite dimensional representation of $G$. Following \cite{COT}, we define $b_n^{G,V}$
to be the number of $G$-indecomposable factors in $V^{\otimes n}$. 

In \cite{CEOT}, Coulembier, Etingof, Ostrik, and Tubbenhauer asked whether there exist constants $r\ge 0$ and $B>A>0$ such that for large $n$, 
\begin{equation}
\label{hope}
A n^r (\dim V)^n < b_n^{G,V} <B n^r (\dim V)^n.
\end{equation}
They proved that this is so when $G=\SL_2$ and $V$ is $2$-dimensional.
For $p=2$, Larsen \cite{Larsen} proved the same result and proved also the desired lower bound for any tilting module $V$ of $\SL_2$.
For $p$ odd, Sheu \cite{Sheu} proved both the upper and lower bounds for all tilting modules $V$. These results are particularly striking because the powers
$r$ appearing in these papers are transcendental and seem to arise from some kind of fractal dimension, whereas in the $p=0$ case for $\SL_2$, it is easy to see $r=-1/2$. There seems to be an expectation that $r$ should have
a dimensional interpretation, and this paper explains why this is so in the most favorable situation, where the indecomposables are all irreducible. 

I was under the impression that
Theorem~\ref{main}, was known or could at least be deduced immediately from known results, but although a good deal \emph{is} known about asymptotics of tensor powers (see, e.g., \cite{Biane,COT,PR,TZ} and the references contained therein), this seems to be a genuine gap in the literature.

Following Mumford \cite{GIT}, we say $G$ is \emph{linearly reductive} if every representation $V$ of $G$ is
completely reducible. If $p=0$, this condition is equivalent to being reductive. If $p>0$, by a theorem of Nagata \cite[Th\'eor\`eme~IV.3.3.6]{DG},
$G$ is linearly reductive if and only if $G^\circ$ is a torus and $G/G^\circ$ is of prime-to-$p$ order.

We define $u$ to be the dimension of any maximal unipotent subgroup of $G$ or, equivalently, the number of positive roots of $G$. The main result of this paper is the following:

\begin{thm}
\label{main}
If $V$ is faithful,
there exist positive constants $A$ and $B$ such that for all $n$ sufficiently large,
\begin{equation}
\label{basic bound}
A n^{-u/2} (\dim V)^n < b_n^{G,V} < B n^{-u/2}(\dim V)^n.
\end{equation}
\end{thm}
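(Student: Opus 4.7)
The plan is to express the irreducible multiplicities $m_\lambda(n) := \dim \Hom_G(L_\lambda, V^{\otimes n})$ in terms of the weight multiplicities $a_\mu(n) := \dim (V^{\otimes n})_\mu$ via the Weyl character formula, apply a local central limit theorem (CLT) to the latter, and extract the asymptotic from the resulting alternating sum. In positive characteristic Nagata's theorem makes $G^\circ$ a torus, so $u=0$, and the bound reduces to a direct count over weight spaces. I therefore assume $K$ has characteristic zero, and (passing to $G^\circ$ and factoring out the central torus, which produces only a uniform central-character twist of every constituent of $V^{\otimes n}$) that $G$ is connected semisimple. Fix $T \subset G$ a maximal torus, $W$ its Weyl group, $r = \dim T$. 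The probability distribution $\mu \mapsto \dim V_\mu / \dim V$ on $X^*(T)$ is $W$-invariant (hence of mean $0$) and has positive-definite $W$-invariant covariance $\Sigma$ (because $V$ is faithful, its weights generate a finite-index sublattice of $X^*(T)$). The Bhattacharya--Rao local CLT for lattice-valued sums then supplies
\begin{equation*}
a_\mu(n) = \frac{(\dim V)^n}{(2\pi n)^{r/2}\sqrt{\det\Sigma}}\, e^{-\mu^\top\Sigma^{-1}\mu/(2n)}\bigl(1 + O(n^{-1/2})\bigr),
\end{equation*}
uniform on the appropriate lattice coset for $\|\mu\| \leq n^{1/2 + \varepsilon}$, with Gaussian tails beyond.

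\medskip

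The Weyl character formula $m_\lambda(n) = \sum_{w \in W} \sgn(w)\, a_{w(\lambda + \rho) - \rho}(n)$ converts the target into an alternating sum, and the $W$-invariance of the Gaussian (from the $W$-invariance of $\Sigma$) rewrites it as an alternating sum of Gaussians evaluated at $\lambda + \rho - w^{-1}\rho$. Taylor expansion combined with the Weyl denominator identity
$\sum_w \sgn(w)\, e^{w\rho\cdot Y} = \prod_{\alpha > 0} 2\sinh(\alpha(Y)/2)$,
applied formally with $Y = \partial/\sqrt n$, identifies this alternating sum, to leading order, with the differential operator $(-1)^u n^{-u/2}\prod_{\alpha > 0}\alpha(\partial)$ acting on the Gaussian density. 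Each factor $\alpha(\partial)$ pulls down a linear factor $-\langle\alpha,\Sigma^{-1}\lambda\rangle/n$, so
\begin{equation*}
m_\lambda(n) \sim (\dim V)^n\, n^{-(r+u)/2}\prod_{\alpha > 0}\bigl\langle\alpha,\Sigma^{-1}(\lambda/\sqrt n)\bigr\rangle\, g(\lambda/\sqrt n),
\end{equation*}
which is positive for $\lambda$ in the interior of the dominant chamber because the $W$-equivariant map $\Sigma^{-1}$ preserves the chamber structure. Summing over dominant $\lambda$ as a Riemann sum with spacing $1/\sqrt n$ in the variable $y = \lambda/\sqrt n$ produces
\begin{equation*}
b_n \sim C\,(\dim V)^n\, n^{-u/2}, \qquad C = (2\pi)^{-r/2}(\det\Sigma)^{-1/2}\!\int_{C^+}\!\!\prod_{\alpha > 0}\langle\alpha,\Sigma^{-1}y\rangle\, e^{-y^\top\Sigma^{-1}y/2}\,dy > 0,
\end{equation*}
from which (\ref{basic bound}) follows with $A$ slightly below and $B$ slightly above $C$.

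\medskip

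The hard part will be controlling the various error terms uniformly. The local CLT is uniform only for $\|\mu\| \leq n^{1/2 + \varepsilon}$; the Taylor expansion identifying the alternating sum with a differential operator degrades near the walls of the dominant chamber, where the leading polynomial $\prod_\alpha\langle\alpha,\Sigma^{-1}y\rangle$ vanishes and higher derivatives of the Gaussian become relevant; and the Riemann-sum conversion contributes its own discretization error. Each source is subdominant---$\lambda$ near walls contribute negligibly because the lower-dimensional volume of wall neighborhoods compensates for the smaller value of $\dim L_\lambda$, and the tail region $\|\lambda\| \gg \sqrt n$ is suppressed by Gaussian decay---but the careful bookkeeping of these estimates, in particular the uniform wall-region bound, is the main technical work.
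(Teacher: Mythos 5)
Your overall strategy is the same as the paper's: a local limit theorem for the weight multiplicities of $V^{\otimes n}$, the Weyl character formula to convert weight multiplicities into irreducible multiplicities via a $u$-fold antisymmetrization/differencing that costs a factor $n^{-u/2}$ and produces a degree-$u$ polynomial factor, and a Riemann-sum/comparison argument over the dominant cone. But two steps as you state them would fail. First, the reduction to connected \emph{semisimple} $G$ by ``factoring out the central torus'' is incorrect when $Z^\circ$ acts on $V$ with more than one character (e.g.\ $G=\GL_2$, $V=\mathrm{std}\oplus\mathrm{std}^*$): the constituents of $V^{\otimes n}$ then carry many different central characters, and counting constituents for $G$ is not the same as counting them for the semisimple quotient (already for $G=\Gm$ and $V$ the sum of the characters $\pm1$, $b_n=2^n$ while the semisimple quotient gives $1$). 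The paper instead keeps $G$ reductive and reduces to $\det V=1$ by passing to a carefully chosen subgroup $G_m\subset G\times Z^\circ$ and twisting; what this buys is that the weight distribution has mean zero on the full rank-$r$ lattice, so the Gaussian is centered at the origin rather than drifting at linear speed relative to the dominant cone. Some such device is needed and is not cosmetic.

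Second, and more seriously, your local CLT with relative error $1+O(n^{-1/2})$ is too weak to survive the alternating sum. After the $u$-fold differencing the main term of $m_\lambda(n)$ has size $(\dim V)^n n^{-r/2-u/2}$, while the error inherited from each of the $|W|$ summands is $(\dim V)^n n^{-r/2-1/2}$; for $u\ge 2$ the error dominates the signal. This is exactly why the paper invokes the full Edgeworth expansion of \cite{BR} to order $k$ with additive error $o(n^{-N})$, $N>r+u$, and then proves Proposition~\ref{difference}: a single finite difference applied to a term of the form $e^{-Q(\vec v)/2n}P(n^{-1/2},n^{-1/2}\vec v)$ again has that form with an extra factor $n^{-1/2}$ and with the degree of the leading polynomial increased by exactly one (so the result is not identically zero). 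Your ``Weyl denominator identity with $Y=\partial/\sqrt n$'' heuristic captures the leading term but gives no control of the polynomial correction terms $P_1,\dots,P_k$, which must themselves be differenced; nor does it justify positivity of the limit polynomial on the dominant cone (the paper gets this for free from non-negativity of the coefficients $c_\lambda$, sidestepping any claim that $\Sigma^{-1}$ preserves the chamber). Finally, note that you claim a genuine asymptotic $b_n\sim C(\dim V)^n n^{-u/2}$, which is stronger than the two-sided bound actually needed and which the paper deliberately avoids asserting; the deferred ``bookkeeping'' near the walls and in the tails is precisely the content of the paper's Proposition~\ref{AB bounds}, where the lower bound uses only that the limit polynomial is non-negative, nonzero of degree $u$, hence bounded below on some open subset of the cone.
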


In fact, this is true in slightly greater generality, namely whenever $\ker \rho$ is a central subgroup of $G$. This hypothesis is certainly needed, since the numbers $b_n^{G,V}$ cannot distinguish $G$ from $G/\ker \rho$, and if $\ker\rho$ is not central, it must have non-trivial unipotent radical, implying that $u$ is strictly smaller for $G/\ker\rho$ than for $G$.

One might ask for more than \eqref{hope}, namely an asymptotic formula of the form $b_n^{G,V} \sim C n^{-u/2} (\dim V)^n$. We know from \cite{CEOT,Larsen,Sheu} that this is too much to
hope for in positive characteristic even in the case $G = \SL_2$. Even in characteristic zero, it is not generally true, for instance when $G$ is the dihedral group of order $8$ and $V$ is its
irreducible $2$-dimensional representation.

The first step in proving Theorem~\ref{main} 
is to reduce to the case that $K=\C$ and $G$ is connected. The possible irreducible factors are then indexed by positive weights. We use a local limit theorem to estimate weight multiplicities for $V^{\otimes n}$
and then use the Weyl character formula 
and the method of finite differences to deduce irreducible character multiplicities. Summing, we obtain an estimate for $b_n^{G,V}$. 
From this point of view, one can give a simple heuristic explanation of Theorem~\ref{main}. Namely, by the central limit theorem, a typical weight of $V^{\otimes n}$ has length of order $\sqrt n$,
and by the Weyl dimension formula, the degree of the irreducible representation associated to a typical weight of order $\sqrt n$ is of order $n^{u/2}$.

Biane \cite{Biane} gave an asymptotic formula for the individual multiplicities of irreducible representations in $V^{\otimes n}$ in the central limit theorem region (i.e., when the length of the highest weight is $O(\sqrt n)$.) This is not quite enough for our purposes since we need the general connected reductive case and also need bounds for multiplicity when the highest weight is somewhat larger than $\sqrt n$. Rather than trying to deal with 
these difficulties using his method we have chosen the slightly different approach described above, which was suggested, but not implemented, by Tate and Zelditch \cite{TZ}.
Recent work of Postnova and Reshetikhin \cite{PR} has improved on Biane in certain respects, in particular giving good estimates of irreducible character multiplicities outside the central limit theorem region. Its goal, however, is somewhat different than the one pursued here, focusing on the Plancherel measure, in which irreducible characters are weighted proportionally to their degrees.
The problem of counting irreducible characters in this weighted sense would be trivial. 
\begin{lem}
Let $G$ be a linearly reductive group and $G^\circ$ its identity component. Then \eqref{basic bound} holds for $G$ if and only if it holds for $G^\circ$.
\end{lem}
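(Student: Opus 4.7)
The plan is to prove a sharper statement: that $b_n^{G,V}$ and $b_n^{G^\circ,V}$ differ by at most a bounded multiplicative factor, depending only on the finite group $\Gamma := G/G^\circ$ and not on $n$. Since the right-hand sides of \eqref{basic bound} depend on $n$ only through $n^{-u/2}(\dim V)^n$, and the integer $u$ is the same for $G$ and $G^\circ$, such a comparison immediately gives the equivalence. The equality of $u$'s holds because every unipotent subgroup of $G$ lies in $G^\circ$: in characteristic $p>0$ this is because $|\Gamma|$ is prime to $p$ by Nagata's theorem, so any $p$-power order element must be in $G^\circ$; in characteristic $0$, unipotent one-parameter subgroups are images under the exponential map of nilpotent elements of the Lie algebra.

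Writing $H := G^\circ$, the comparison is a straightforward application of Clifford theory to the normal subgroup $H$ of $G$ with finite quotient $\Gamma$. Decompose $V^{\otimes n}|_H = \bigoplus_W W \otimes M_W$ into isotypic components, where $W$ ranges over irreducible $H$-representations appearing in $V^{\otimes n}$ and $m_W := \dim M_W$. The $G$-action induces an action of $\Gamma$ permuting these isotypic components according to its action on isomorphism classes of irreducible $H$-modules. Let $\Gamma_W \subseteq \Gamma$ denote the stabilizer of the class of $W$ and $e_W := [\Gamma : \Gamma_W]$ its orbit size, so that the $\Gamma$-orbit of $W$ contributes exactly $e_W m_W$ to $b_n^{H,V}$. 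Clifford theory endows $M_W$ with a natural projective $\Gamma_W$-representation (with $2$-cocycle depending only on $W$), and the $G$-irreducible summands of $V^{\otimes n}$ restricting to sums of modules in the $\Gamma$-orbit of $W$, counted with $G$-multiplicity, are parameterized by the irreducible projective constituents of this $m_W$-dimensional $\Gamma_W$-module.

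Since every irreducible projective representation of $\Gamma_W$ has dimension at most $\sqrt{|\Gamma_W|} \leq \sqrt{|\Gamma|}$, the number of irreducible constituents of $M_W$, counted with multiplicity, lies between $m_W/\sqrt{|\Gamma|}$ and $m_W$. Summing over $\Gamma$-orbits of $H$-irreducibles appearing in $V^{\otimes n}$ and using $1 \leq e_W \leq |\Gamma|$, we obtain
\begin{equation*}
|\Gamma|^{-3/2}\, b_n^{H,V} \;\leq\; b_n^{G,V} \;\leq\; b_n^{H,V},
\end{equation*}
which finishes the proof. The only point demanding mild care is the projective nature of the $\Gamma_W$-action on $M_W$, but since $K$ is algebraically closed of characteristic prime to $|\Gamma|$ the required results, namely Clifford's theorem and the dimension bound $\dim \pi \leq \sqrt{|\Gamma_W|}$ for irreducible projective representations, are entirely standard.
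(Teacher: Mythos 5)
Your proof is correct and follows essentially the same route as the paper's: a two-sided multiplicative comparison of $b_n^{G,V}$ and $b_n^{G^\circ,V}$ via Clifford theory, together with the observation that $u$ is the same for $G$ and $G^\circ$. The paper obtains the comparison more cheaply --- Clifford's theorem already bounds the number of irreducible $G^\circ$-constituents of each irreducible $G$-summand by $[G:G^\circ]$, giving $b_n^{G,V}\le b_n^{G^\circ,V}\le [G:G^\circ]\,b_n^{G,V}$ directly --- so your detour through isotypic components, inertia groups, and projective representations, while valid, is more machinery than the statement requires.
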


\begin{proof}
Indeed, by \cite{Clifford}, the restriction of a $G$-irreducible summand of $V^{\otimes n}$ to $G^\circ$ decomposes into no more than
$G/G^\circ$ irreducible factors.
\end{proof}

We assume henceforth that $G$ is connected. In the positive characteristic case, that means that $G$ is a torus, so every
$G$-representation decomposes into $1$-dimensional irreducibles, so $b_n^{G,V} = (\dim V)^n$, and the theorem is obvious. 
We therefore assume for the rest of the paper that $p=0$.

As $G$ is reductive and $\ker \rho$ is central, it follows that the unipotent part of $\ker\rho$ is trivial, so the homomorphism $G\to G/\ker\rho$
maps unipotent subgroups of $G$ isomorphically to unipotent subgroups of $G/\ker\rho$.
Conversely, starting with a unipotent subgroup $U$ of $G/\ker\rho$, the identity component of its inverse in $G$ is nilpotent and therefore has a subgroup which
 maps isomorphically to $U$ \cite[Theorem~10.6 (3)]{Borel}. This bijective correspondence between unipotent subgroups of $G$ and $G/\ker\rho$ means that $u$ is the same for $G$ and $G/\ker\rho$, and of course
 $b_n^{G,V} = b_n^{G/\ker\rho,V}$ are also the same for all $n$,
 so we may freely replace $G$ by $G/\ker\rho$.

\begin{lem}
It suffices to prove Theorem~\ref{main} under the hypothesis that $\det\,V = 1$.
\end{lem}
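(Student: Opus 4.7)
The plan is to construct a finite cover $\pi\colon\tilde G\to G$ on which $\det V$ acquires a $d$-th root (where $d=\dim V$), and then twist the pullback of $V$ by the inverse of this root to get a representation $V'$ of $\tilde G$ with $\det V'=1$, all without changing $b_n$, $\dim V$, or $u$.

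Concretely, set $\chi=\det V\colon G\to\mathbb{G}_m$ and form the fiber product
\[
\tilde G \;=\; G\times_{\mathbb{G}_m,\,z\mapsto z^d}\mathbb{G}_m \;=\; \{(g,z)\in G\times\mathbb{G}_m : z^d=\chi(g)\},
\]
with $\pi(g,z)=g$. Then $\pi$ is a finite (étale) cover with kernel $\mu_d$, and the projection $L(g,z)=z$ is a character of $\tilde G$ satisfying $L^d=\pi^*\chi$. Define the $\tilde G$-representation $V' := \pi^*V\otimes L^{-1}$, which has $\det V' = \pi^*\chi\cdot L^{-d}=1$.

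The verifications are then the following. First, $V'^{\otimes n}=\pi^*(V^{\otimes n})\otimes L^{-n}$, and if $V^{\otimes n}=\bigoplus_i W_i$ is the decomposition into $G$-irreducibles, then each $\pi^*W_i\otimes L^{-n}$ is an irreducible $\tilde G$-representation (irreducibility is preserved under pullback along a surjection and under twist by a character) and these are pairwise non-isomorphic (since tensoring by $L^n$ recovers the pairwise non-isomorphic $\pi^*W_i$, which are non-isomorphic because $\pi$ is surjective). Hence $b_n^{\tilde G, V'}=b_n^{G,V}$. Second, $\dim V'=\dim V$, and $\pi$ being an isogeny induces a bijection between maximal unipotent subgroups of $\tilde G$ and of $G$ (of the same dimension), so $u$ is unchanged. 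Third, the kernel of $\tilde G\to\GL(V')$ consists of pairs $(g,z)$ with $\rho(g)=z\cdot\mathrm{id}_V$; since $V$ is faithful, any such $g$ is determined by $z$ and lies in the subgroup of $G$ acting by scalars on $V$, which is central in $G$ by Schur/faithfulness, so this kernel is central in $\tilde G$. Consequently the (generalized, central-kernel) form of Theorem~\ref{main} applied to $(\tilde G,V')$ with $\det V'=1$ yields the bounds on $b_n^{\tilde G, V'}=b_n^{G,V}$ with the correct exponent $u$ and base $\dim V$.

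The only subtle point is the bookkeeping for the kernel: we use the central-kernel generalization rather than strict faithfulness, since the scalar matrices in $\rho(G)$ meeting $\mu_d\hookrightarrow\mathbb{G}_m$ will typically be absorbed into $\ker(\tilde G\to\GL(V'))$. No step is technically hard once the cover $\tilde G$ is set up; the main thing to get right is that twisting by the character $L^{-1}$ and pulling back along the finite isogeny $\pi$ each preserve the count $b_n$ and leave $u$ and $\dim V$ invariant.
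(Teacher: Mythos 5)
Your construction is in essence the paper's: both pass to an isogeny cover of $G$ on which $\det V$ acquires a $(\dim V)$-th root as a character, untwist the pullback of $V$ by that root, and check that $b_n$, $\dim V$, $u$, and the centrality of the kernel are all preserved. The one point you do not address, and which the paper's more elaborate construction is designed to avoid, is connectedness. This lemma occurs \emph{after} the reduction to connected $G$, and the remainder of the argument (maximal compact subgroup, Weyl character formula, etc.) uses connectedness; but your fiber product $\tilde G = G\times_{\mathbb{G}_m,\,z\mapsto z^d}\mathbb{G}_m$ need not be connected --- for instance, if $\chi=\psi^d$ for some character $\psi$, then $(g,z)\mapsto (g,z\psi(g)^{-1})$ identifies $\tilde G$ with $G\times\mu_d$. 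The paper sidesteps this by realizing its cover as the kernel $G_m$ of a homomorphism $G\times Z^\circ\to G/D$ onto a torus, which is an extension of $Z^\circ$ by $D\times 1$ and hence connected. In your setup the gap is harmless and fixable in one line: since $G$ is connected, $\tilde G^\circ$ still surjects onto $G$ with central kernel $\mu_d\cap\tilde G^\circ$, the restriction of $V'$ to $\tilde G^\circ$ still has trivial determinant, its summands $\pi^*W_i\otimes L^{-n}$ remain irreducible, so $b_n^{\tilde G^\circ,V'}=b_n^{G,V}$ and $u$ is unchanged; so simply replace $\tilde G$ by $\tilde G^\circ$ before invoking the theorem. (Your extra verification that the twisted summands are pairwise non-isomorphic is not needed, since $b_n$ counts indecomposable summands with multiplicity.) With the connectedness point added, your proof is complete and matches the paper's strategy.
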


\begin{proof}
Let $Z$ denote the center of $G$, $Z^\circ$ its identity component, and $D$ the derived group of $G$.  
For each integer $m>0$, let $\phi_m\colon G\times Z^\circ\to G/D$ denote the homomorphism $\phi_m(g,z) = \overline{g+mz}$,
and let $G_m = \ker \phi_m$. 

Let $X_Z$ denote the character group of $Z^\circ$, and let $x_1,x_2,\ldots,x_{\dim V}\in X_Z$ denote the characters of the restriction of $V$ to $Z^\circ$,
with repetitions allowed.  Let $s =  x_1+\cdots+x_{\dim V}\in X_Z$, and let $W_s$ denote the $1$-dimensional representation of $Z^\circ$ with with weight $s$.
Thus, the restriction of $\det\, V$ to $Z^\circ$ has weight $w$. Let $V_m = V\boxtimes 1|_{G_m}$ denote the pullback of $V$ to $G_m$.
Thus, $\det\,V_m$ is the $m$th tensor power of the restriction of $1\boxtimes W_s$ to $G_m$.
It follows that $V_m\otimes (1\boxtimes W_s^*)|_{G_m}$ has determinant $1$.  Replacing $(G,V)$ by 
$$(G_m,V_m\otimes (1\boxtimes W_s^*)|_{G_m}),$$
we may assume our representation has determinant $1$.
\end{proof}

Applying this, we assume from now on that $\det\,V = 1$. Replacing $G$ by $G/\ker\rho$, we may further assume that $V$ is faithful. 

Extending scalars from $K$ to some larger algebraically closed field does not change the values of $b_n^{G,V}$,
since these are determined by which flag varieties $\GL(V^{\otimes n})/P$ have fixed points under the action of $G$,
and the existence of such a fixed point does not depend on which algebraically closed field we work over.
Therefore, we may assume $K=\C$.  Let $G^c$ denote a maximal compact subgroup  of $G$, which is a connected Lie group since $G$ is connected.

Let now $T$ denote a maximal torus of $G^c$ and $X$ the group of characters of $T$. Let $\Phi\subset X$ denote the root system of $G^c$ with respect to $T$. 
Fix a Weyl chamber, and let  $\Phi^+$ denote the set of positive roots of $\Phi$ with respect to this chamber.
Let $W$ denote the Weyl group of $G^c$ with respect to $T$, $\delta$ half the sum of the elements of $\Phi^+$, and $\Lambda\subset X$ the set of
dominant weights.
Let $\chi_V\in \Z[X]$ denote the formal character of $V$. 

By the Weyl character formula for connected compact Lie groups \cite[VI~(1.7)]{Bt}, if $\chi_\lambda$ denotes the character of the irreducible representation of $G$ with highest weight $\lambda\in\Lambda$, then
$$\chi_\lambda\prod_{\alpha\in \Phi^+} (1-[\alpha]) = \sum_{w\in W} \sgn(w) [w(\lambda+\delta)].$$
Therefore, writing
$$\chi_V = \sum_{\lambda\in\Lambda} a_\lambda^V \chi_\lambda,$$
we obtain
\begin{equation}
\label{formal}
(\chi_V^{\otimes n}) \prod_{\alpha\in \Phi^+} (1-[\alpha]) = (\chi_V)^n \prod_{\alpha\in \Phi^+} (1-[\alpha]) 
= \sum_{\lambda\in\Lambda} a_\lambda^{V^{\otimes n}}  \sum_{w\in W} \sgn(w) [w(\lambda+\delta)].
\end{equation}
Since $b_n^{G,V}$ is the sum over dominant weights $\lambda$ of $a_\lambda^{V^{\otimes n}}$, we can write
$$b_n^{G,V} = \sum_{\lambda\in \delta+\Lambda} c_{\lambda}^{G,V^{\otimes n}},$$
where $c_{\lambda}^{G,V^{\otimes n}} = a_{\lambda-\delta}^{V^{\otimes n}}$ denotes the
$[\lambda]$-coefficient of  \eqref{formal}.

If $\chi_V = \sum_{\chi\in S} p_\chi [\chi]$, we let $\X_1,\X_2,\X_3,\ldots$ be a sequence of i.i.d. random variables, with values in $S\subset X$,
such that $\Pr[\X_i=\chi] = p_\chi/\dim V$.
This distribution is chosen so that
the multiplicity of $\chi$ as a weight of $V^{\otimes n}$ is 
\begin{equation}
\label{prob mult}
(\dim V)^n \Pr[\X_1+\cdots+\X_n=\chi].
\end{equation}
Taking a finite  Edgeworth expansion \cite[Theorem 22.1]{BR}, for every positive integer $k$, we get an estimate of the form
\begin{equation}
\label{Edgeworth}
\Pr[\X_1+\cdots+\X_n=\chi] = n^{-r/2}\biggl(e^{-Q(\chi)/2n}\biggl(\sum_{i=0}^k n^{-i/2}P_i(\chi/\sqrt n\biggr) + o(n^{-k/2})\biggr),
\end{equation}
where $Q$ is a positive-definite quadratic form on $X\otimes \R \cong \R^r$, $P_0$ is a positive constant, and $P_1,\ldots,P_k$ are polynomial functions on $X\otimes\R$.
Defining $P(x_0,x_1,\ldots,x_r) = \sum_{i=0}^k x_0^i P_i(x_1,\ldots,x_r)$, we can rewrite this as
$$\Pr[\X_1+\cdots+\X_n = \chi] = n^{-r/2} e^{-Q(\chi)/2n} P(n^{-1/2},n^{-1/2}\chi) + o(n^{-r/2-k/2}).$$

\begin{prop}
\label{difference}
Let $Q$ be a positive definite form on $\R^r$, $P(x_0,x_1,\ldots,x_r)$ a non-zero polynomial,  $\vec a = (a_1,\ldots,a_r)\in \Z^r$ a non-zero vector, and $N$ a positive real number.
Suppose $f\colon \N\times \Z^r\to \R$ is a function such that for all $\vec v\in \Z^r$ and all positive integers $n$,
$$f(n,\vec v) = e^{-Q(v)/2n}P(n^{-1/2},n^{-1/2}\vec v)+ o(n^{-N}),$$
Then there exists a polynomial $P'(x_0,x_1,\ldots,x_r)$ such that
$$f(n,\vec v+\vec a) - f(n,\vec v) = n^{-1/2} e^{-Q(v)/2n}P'(n^{-1/2},n^{-1/2}\vec v)+ o(n^{-N}).$$
Moreover,
$$\deg P'(0,x_1,\ldots,x_r) = 1+\deg P(0,x_1,\ldots,x_r).$$
\end{prop}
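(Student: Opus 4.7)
The plan is to carry out a careful Taylor expansion of both $e^{-Q(\vec v)/2n}$ and $P(n^{-1/2},n^{-1/2}\vec v)$ under the shift $\vec v \mapsto \vec v + \vec a$, then combine and truncate to extract $P'$. I would first substitute the hypothesis into $f(n,\vec v+\vec a)$ and $f(n,\vec v)$ so that the two $o(n^{-N})$ errors combine, reducing the problem to analyzing
\[
\Delta := e^{-Q(\vec v+\vec a)/2n} P(n^{-1/2},n^{-1/2}(\vec v+\vec a)) - e^{-Q(\vec v)/2n} P(n^{-1/2}, n^{-1/2}\vec v).
\]
Writing $B$ for the symmetric bilinear form polarizing $Q$, the identity $Q(\vec v+\vec a) = Q(\vec v) + 2B(\vec v,\vec a) + Q(\vec a)$ lets me factor out $e^{-Q(\vec v)/2n}$: setting $\epsilon := n^{-1/2}$ and $\vec y := \epsilon \vec v$, one has $e^{-Q(\vec v+\vec a)/2n} = e^{-Q(\vec v)/2n}\,e^{S(\epsilon,\vec y)}$ with $S(\epsilon,\vec y) := -\epsilon B(\vec y,\vec a) - \tfrac{1}{2}\epsilon^2 Q(\vec a)$ divisible by $\epsilon$.

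Next I would combine two expansions. Since $P$ is a polynomial, $P(\epsilon,\vec y+\epsilon\vec a) - P(\epsilon,\vec y) = \epsilon\,\tilde P(\epsilon,\vec y)$ is an \emph{exact} polynomial identity with $\tilde P(0,\vec y) = (\vec a\cdot\nabla_{\vec y})P(0,\vec y)$. Writing $e^S - 1 = \sum_{k=1}^K S^k/k! + R_{K+1}(S)$ and noting that $S^k/\epsilon$ is a polynomial in $\epsilon,\vec y$ for each $k \geq 1$ (since $S$ is divisible by $\epsilon$), one obtains
\[
\Delta = \epsilon\,e^{-Q(\vec v)/2n}\Bigl[\tilde P(\epsilon,\vec y) + \sum_{k=1}^K \tfrac{S^k}{k!\,\epsilon}\,P(\epsilon,\vec y+\epsilon\vec a)\Bigr] + e^{-Q(\vec v)/2n}\,R_{K+1}(S)\,P(\epsilon,\vec y+\epsilon\vec a),
\]
and the bracketed expression defines the desired polynomial $P'(\epsilon,\vec y)$.

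The main technical obstacle is controlling the remainder uniformly in $\vec v$. Using $|R_{K+1}(S)| \leq e^{|S|}|S|^{K+1}/(K+1)!$, the bound $|S| \leq C(\epsilon|\vec y| + \epsilon^2)$, and the fact that $e^{-Q(\vec v)/2n} = e^{-Q(\vec y)/2}$ dominates any polynomial in $\vec y$, one sees that $e^{-Q(\vec v)/2n}\,|R_{K+1}(S)|\,|P(\epsilon,\vec y+\epsilon\vec a)| = O(\epsilon^{K+1})$ uniformly in $\vec v$; choosing $K \geq 2N$ then absorbs this into the $o(n^{-N})$ error.

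For the degree assertion, I would evaluate $P'$ at $\epsilon = 0$: only the $k = 1$ term of the sum contributes (since $S^k/\epsilon$ carries a factor $\epsilon^{k-1}$ that vanishes for $k \geq 2$), and $(S/\epsilon)|_{\epsilon = 0} = -B(\vec y,\vec a)$, so
\[
P'(0,\vec y) = (\vec a\cdot\nabla_{\vec y})P(0,\vec y) - B(\vec y,\vec a)\,P(0,\vec y).
\]
Since $Q$ is positive definite, $B$ is nondegenerate, and since $\vec a \neq \vec 0$, the linear form $B(\cdot,\vec a)$ is nonzero. The second term therefore has degree exactly $1 + \deg P(0,\cdot)$, strictly exceeding the first, which yields the claimed identity.
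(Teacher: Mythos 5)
Your proof is correct and follows essentially the same route as the paper: split the difference into the exponential-increment piece and the polynomial-increment piece, expand $e^{S}-1$ to order $K>2N$ so that $S^k/\epsilon$ is polynomial, and read off $P'(0,\vec y)=(\vec a\cdot\nabla_{\vec y})P(0,\vec y)-B(\vec y,\vec a)P(0,\vec y)$, whose degree is governed by the second term. The only (minor) divergence is in taming the Taylor remainder: the paper first restricts to $|\vec v|=O(n^{1/2+\epsilon})$ and discards the Gaussian tail, whereas you bound the remainder uniformly in $\vec v$ by letting $e^{-Q(\vec y)/2}$ absorb $e^{|S|}$ and the polynomial growth --- a slightly cleaner but equivalent device.
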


\begin{proof}
For every $\epsilon > 0$, it suffices to prove this under the assumption that $|\vec v| = O(n^{1/2+\epsilon})$,
since $e^{-n^{2\epsilon}} = o(n^{-N})$.

Up to an error $o(n^{-N})$, we write $f(n,\vec v+\vec a) - f(n,\vec v)$ as
\begin{equation}
\label{two pieces}
\begin{split} & \Bigl(e^{-Q(\vec v+\vec a)/2n} - e^{-Q(\vec v)/2n}\Bigr)P(n^{-1/2},n^{-1/2}(\vec v+\vec a))\\
&\qquad\quad+ e^{-Q(\vec v)/2n} \biggl(P(n^{-1/2},n^{-1/2}(\vec v+\vec a))- P(n^{-1/2},n^{-1/2}\vec v))\biggr).
\end{split}
\end{equation}
The first multiplicand on the left hand side can be written
$$e^{-Q(\vec v+\vec a)/2n} - e^{-Q(\vec v)/2n} = e^{-Q(\vec v)/2n} (e^{-\vec a\cdot \vec v /n- Q(\vec a)/2n}-1),$$
where $\cdot$ is the inner product associated to $Q$. 

Choose $k>2N$. Since $|\vec a\cdot \vec v|/n = O(n^{-1/2+\epsilon})$, 
taking $\epsilon$ small enough, we have 
$$|\vec a\cdot \vec v/n|^{k+1} = o(n^{-k/2}) = o(n^{-N}).$$
Thus,
$$e^{-\vec a\cdot \vec v /n- Q(\vec a)/n}-1 = \sum_{i=1}^k \frac{(-n^{-1/2}\vec a\cdot (n^{-1/2}\vec v) - n^{-1}Q(\vec a))^i}{i!}
+o(n^{-k/2}),$$
which is $R(n^{-1/2},n^{-1/2}\vec v) + o(n^{-N})$, where
$$R(x_0, \vec x) = \sum_{i=1}^k \frac{\bigl(-x_0 (\vec a\cdot \vec x) - Q(\vec a) x_0^2/2\bigr)^i}{i!}.$$
Defining
$$S(x_0,\vec x) =  \frac{R(x_0,\vec x)P(x_0,\vec x+x_0\vec a)}{x_0},$$
we see that $n^{-1/2}S(n^{-1/2},n^{-1/2} \vec v)e^{-Q(\vec v)/2n}$ gives the first term in \eqref{two pieces} 
up to an error term $o(n^{-N})$.  Moreover, $S(0,\vec x) = -(\vec a\cdot \vec x) P(0,\vec x)$, whose degree is $1+\deg P(0,\vec x)$.

Finally, defining
$$S'(x_0,\vec x) = \frac{P(x_0,\vec x+ x_0\vec a) - P(x_0,\vec x)}{x_0},$$
we see that $S'$ is a polynomial, and 
$$n^{-1/2} S'(n^{-1/2},n^{-1/2} \vec v) e^{-Q(\vec v)/2n}$$
gives the second term in \eqref{two pieces}.
As $S'(0,\vec x)$ is a directional derivative of $P(0,\vec x)$ its degree is at most $-1+\deg S(0,\vec x)$.
Adding the two terms in \eqref{two pieces} together, we get $P'$ of the desired form.
\end{proof}

\begin{prop}
\label{c_lambda estimate}
For every $N$, there exists a polynomial $P$ on $\R\times X\otimes \R$ such that 
\begin{equation}
\label{asymptotic}
c_\lambda^{G,V^{\otimes n}} (\dim V)^{-n} = n^{-r/2-u/2} e^{-Q(\lambda)/2n} P(n^{-1/2},n^{-1/2}\lambda) + o(n^{-N}).
\end{equation}
Moreover, $P(0,\vec x)$ has degree $u$ and is non-negative on the dominant cone of $X\otimes \R$
\end{prop}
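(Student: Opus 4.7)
The plan is to realize $c_\lambda^{G,V^{\otimes n}}$ as an iterated finite difference of the weight multiplicities of $V^{\otimes n}$ and apply Proposition~\ref{difference} once for each positive root.

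Let $m_n(\chi)$ denote the multiplicity of $\chi$ as a weight of $V^{\otimes n}$. By \eqref{formal}, $c_\lambda^{G,V^{\otimes n}}$ is the $[\lambda]$-coefficient of $\chi_V^n \prod_{\alpha \in \Phi^+}(1-[\alpha])$; expanding the product,
$$c_\lambda^{G,V^{\otimes n}} = \sum_{S\subseteq\Phi^+}(-1)^{|S|} m_n\!\Bigl(\lambda - \sum_{\alpha\in S}\alpha\Bigr),$$
the $u$-fold iterated finite difference of $m_n$ along the positive-root directions (the shift operators commute, so the order is irrelevant).

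From \eqref{prob mult} and the Edgeworth expansion \eqref{Edgeworth}, for any prescribed $N'$ one can choose $k$ large enough that
$$n^{r/2}(\dim V)^{-n}m_n(\chi) = e^{-Q(\chi)/2n}\,P_0(n^{-1/2},n^{-1/2}\chi)+o(n^{-N'}),$$
where $P_0(0,\vec x)$ is a positive constant. Fix an ordering $\Phi^+=\{\alpha_1,\ldots,\alpha_u\}$ and define recursively
$$g_0(n,\vec v) = n^{r/2}(\dim V)^{-n}m_n(\vec v),\qquad g_i(n,\vec v) = n^{1/2}\bigl(g_{i-1}(n,\vec v)-g_{i-1}(n,\vec v-\alpha_i)\bigr).$$
Each step is precisely the finite difference handled by Proposition~\ref{difference} (applied with $\vec a=-\alpha_i$, then rescaled by $n^{1/2}$): if $g_{i-1}$ has an expansion of the form $e^{-Q(\vec v)/2n}P_{i-1}(n^{-1/2},n^{-1/2}\vec v)+o(n^{-N'+(i-1)/2})$, then so does $g_i$, with $\deg P_i(0,\vec x) = \deg P_{i-1}(0,\vec x)+1$. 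After $u$ iterations $\deg P_u(0,\vec x)=u$, and unwinding $g_u(n,\lambda) = n^{r/2+u/2}(\dim V)^{-n}c_\lambda^{G,V^{\otimes n}}$ gives the expansion \eqref{asymptotic} with $P=P_u$, provided $N'$ is chosen large enough compared with $N$.

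Non-negativity of $P(0,\vec x)$ on the dominant cone follows from the fact that $c_\lambda^{G,V^{\otimes n}}=a_{\lambda-\delta}^{V^{\otimes n}}\geq 0$ for $\lambda\in\delta+\Lambda$: for any $\vec x$ in the interior of the dominant Weyl chamber, pick a lattice sequence $\lambda_n\in\delta+\Lambda$ with $\lambda_n/\sqrt n\to \vec x$; the leading term $n^{-r/2-u/2}e^{-Q(\vec x)/2}P(0,\vec x)$ of \eqref{asymptotic} must then be non-negative, forcing $P(0,\vec x)\geq 0$, and continuity extends this to the closure. The only real obstacle is tracking the correct rescaling of $n$ across the $u$ iterations and verifying that each $g_i$ still fits the hypothesis of Proposition~\ref{difference}, but both are routine once that proposition is in hand.
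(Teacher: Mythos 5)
Your proposal is correct and follows essentially the same route as the paper: expand $\prod_{\alpha\in\Phi^+}(1-[\alpha])$ to realize $c_\lambda^{G,V^{\otimes n}}$ as a $u$-fold finite difference of the Edgeworth-expanded weight multiplicities, apply Proposition~\ref{difference} once per positive root (gaining a factor $n^{-1/2}$ and one degree in $P(0,\vec x)$ each time), and deduce non-negativity of $P(0,\vec x)$ on the dominant cone from $c_\lambda^{G,V^{\otimes n}}\ge 0$ by a limiting argument. Your version merely makes the iteration and the error bookkeeping more explicit than the paper does (modulo an immaterial sign from taking $\vec a=-\alpha_i$ rather than $\alpha_i$).
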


\begin{proof}
By \eqref{prob mult}, $\Pr[\X_1+\cdots+\X_n = \lambda]$ gives the left hand side of \eqref{asymptotic}.
We choose $N>r+u$ and apply \eqref{Edgeworth} to obtain a polynomial $P$ with error term $o(n^{-N})$.
We then apply Proposition~\ref{difference} $u$ times. Note that multiplying an element of $\Z[X]$ by $(1-[\alpha])$ applies a difference
operator with respect to the vector $\alpha$ to the coefficients.
Each time we do so, we introduce a factor of $n^{-1/2}$, and $\deg P(0,\vec x)$ increases by $1$, leading to the right hand side of \eqref{asymptotic}. 

Every $\vec v$ in the dominant cone of $X\otimes \R$ is the limit of a convergent sequence of vectors of the form $n_i^{-1/2}\lambda_i$, where $\lambda_i$ is a dominant weight, $n_i$ is a positive integer, and $n_i\to \infty$.
We have
$$P(0,\vec v) = \lim_{i\to \infty} P(n_i^{-1/2}, n_i^{-1/2}\lambda_i).$$
If $P(0,\vec v) < 0$, the terms on the right hand side of \eqref{asymptotic} become bounded above by a negative multiple of $n_i^{-r/2-u/2}$ plus $o(n_i^{-N})$, so all but finitely many terms are negative. However, the left hand side is always non-negative.

\end{proof}

\begin{prop}
\label{AB bounds}
Let $P$ be a polynomial in $x_0,\ldots,x_r$ which is non-negative on $[0,\infty)^{r+1}$ and not divisible by $x_0$.
Let $M$ be a semigroup contained in $\N^r$ which spans $\R^n$, and $Q$ be a positive-definite quadratic form on $\R^r$.
There exist constants $B>A>0$ such that for all $t>0$ sufficiently small,
$$A t^{-r} < \sum_{\lambda\in M}  e^{-Q(t \lambda)/2} P(t,t\lambda) < B t^{-r}.$$
\end{prop}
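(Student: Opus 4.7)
The plan is to recognize both sides of the inequality as Riemann-sum expressions whose continuum limits are finite positive Gaussian integrals. Every term in the sum is non-negative, since for $t>0$ and $\lambda\in M\subset\N^r$ the point $(t,t\lambda)$ lies in $[0,\infty)^{r+1}$, where $P\ge 0$ by hypothesis. Write $f_t(u)=e^{-Q(u)/2}P(t,u)$; thanks to the Gaussian factor, $f_t$ and all its partial derivatives decay faster than any polynomial, uniformly as $t$ stays in a bounded interval.

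For the upper bound, I would simply enlarge $M$ to the full orthant $\N^r$, giving
$$\sum_{\lambda\in M} f_t(t\lambda) \;\le\; \sum_{\lambda\in\N^r} f_t(t\lambda).$$
The right-hand side is a Riemann sum over the lattice $(t\N)^r$ with mesh $t$, and its continuum limit is the Gaussian integral $\int_{[0,\infty)^r}f_0(u)\,du<\infty$. Standard Riemann-sum error estimates (using the Gaussian decay to control the tail) give
$$t^r\sum_{\lambda\in\N^r}f_t(t\lambda) \;=\; \int_{[0,\infty)^r}f_0(u)\,du \;+\; O(t),$$
yielding a constant $B$ with $\sum_{\lambda\in M}f_t(t\lambda)<Bt^{-r}$ for all sufficiently small $t>0$.

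For the lower bound, I would use the spanning hypothesis to pick $\R$-linearly independent $m_1,\dots,m_r\in M$ and set $m_0=m_1+\cdots+m_r\in M$. Because $M$ is a semigroup, the translated sub-lattice $\{m_0+k_1m_1+\cdots+k_rm_r:k_i\in\N\}$ is contained in $M$, so restricting the sum to it gives a lower bound. After the linear change of variable $u=t(m_0+\sum k_im_i)$, this restricted sum becomes a Riemann sum with mesh $t$ on the lattice $t(\Z m_1+\cdots+\Z m_r)$ inside the translated cone $tm_0+C'$, where $C'=\sum\R_{\ge 0}m_i$. Its continuum limit is
$$\frac{1}{|\det(m_1,\dots,m_r)|}\int_{C'} f_0(u)\,du,$$
which is strictly positive: $P(0,\cdot)$ is a non-zero polynomial, hence positive on a dense open subset of $[0,\infty)^r$ by the non-negativity hypothesis, and $C'$ has non-empty interior in $\R^r$ by the linear independence of the $m_i$. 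This produces a constant $A>0$ with $\sum_{\lambda\in M}f_t(t\lambda)>At^{-r}$ for all sufficiently small $t>0$.

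The main technical issue is controlling the Riemann-sum error uniformly in $t$ as $t\to 0^+$: one needs that $f_t(u)-f_0(u)=t\,g(t,u)$ with $g$ of Gaussian decay in $u$, so that the polynomial $t$-dependence of $P$ is merely a lower-order perturbation, and one needs the standard fact that a Riemann sum with mesh $t$ of a smooth, rapidly-decaying function differs from its integral by $O(t)$. Neither step is difficult, but care is required to ensure that these $O(t)$ corrections are safely absorbed into the leading $\Theta(t^{-r})$ asymptotic on both sides.
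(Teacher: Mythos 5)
Your argument is correct and follows essentially the same route as the paper: the upper bound by enlarging $M$ to all of $\N^r$ and comparing the Gaussian-weighted lattice sum to an integral, and the lower bound by restricting to a full-rank sub-semigroup of $M$ on which $P(0,\cdot)$ is positive on a set of positive measure, with the $t$-dependence of $P$ split off as a lower-order ($O(t^{1-r})$) perturbation. The only real difference is presentational: you package both bounds as Riemann-sum limits that identify the constants as Gaussian integrals, whereas the paper argues monomial-by-monomial for the upper bound and by direct lattice-point counting in a box for the lower bound, thereby sidestepping the uniform-in-$t$ Riemann-sum error control that you correctly flag as the one technical point needing care.
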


\begin{proof}
For the upper bound, we prove slightly more, namely,
\begin{equation}
\label{absolute ub}
 \sum_{\lambda\in M}  e^{-Q(t \lambda)/2}|P(t,t\lambda)| < B t^{-r}.
 \end{equation}
for this, we may assume without loss of generality that $M=\N^r$. It suffices to prove the bound for each 
monomial $x_0^{d_0} x_1^{d_1} \cdots x_r^{d_r}$.
We fix $c$ and $C$ such that 
$$C(x_1^2+\cdots+x_r^2) > Q(x_1,\ldots,x_r) > c(x_1^2+\cdots+x_r^2).$$
Therefore,
\begin{align*}
\sum_{\lambda\in \N^r} P(t,t \lambda)  e^{-Q(t \lambda)/2} &< t^{d_0+\cdots+d_r}\sum_{(n_1,\ldots,n_r)\in \N^r}n_1^{d_1}\cdots n_r^{d_r}e^{-ct^2(n_1^2+\cdots+n_r^2)/2} \\
&= t^{d_0+\cdots+d_r}\prod_{i=1}^r  \sum_{n=0}^\infty n^{d_i} e^{-ct^2n_i^2/2}\\
&=  t^{d_0+\cdots+d_r}\prod_{i=1}^r \Bigl( \int_0^\infty x^{d_i} e^{-ct^2x^2/2}dx+O(1)\Bigr)\\
&=  t^{d_0+\cdots+d_r}\prod_{i=1}^r \Bigl( t^{-1}\int_0^\infty (x/t)^{d_i} e^{-cx^2/2}dx+O(1)\Bigr)\\
&< B_0 t^{-r},
\end{align*}
for all sufficiently small $t$ if $B_0$ is large enough. 

For the lower bound, we first observe that $P(0,\vec x)$ is not identically zero, and since it is non-negative on $[0,\infty)^r$, we can
choose a non-empty open set $U$ 
in $\R^r$ on which $P(0,\vec x)$ can be bounded below by a constant $h>0$.
We then choose linearly independent 
$\vec v_1,\ldots,\vec v_r\in \delta+\N^r$ and intervals $(a_i,b_i)\subset (0,\infty)$ such that 
if $x_i\in (a_i,b_i)$ for $i=1,\ldots,r$, then $\sum_i x_i \vec v_i \in U$. 

We write $P$ as a sum of $P(0,\vec x)$ and a polynomial of the form $x_0 P_0$.
The sum \eqref{absolute ub} for $P_0$ has an upper bound of the form $B t^{-r}$, so the sum for $x_0P_0$ has upper bound $B t^{1-r}$.
Therefore,
\begin{align*}
\sum_{\lambda\in \N^r} (P(t,t\lambda)) e^{-Q(t \lambda)}
&\ge \sum_{\lambda\in \N^r} (P(0,t\lambda)) e^{-Q(t \lambda)} - Bt^{1-r}\\
&\ge h\!\!\! \sum_{x_1\in t^{-1}[a_1,b_1]}\!\!\!\cdots\!\!\!\sum_{x_r\in t^{-1}[a_r,b_r]} e^{-Ct^2\sum_i x_i^2/2}-Bt^{1-r}\\
& = h\prod_{i=1}^r \sum_{x\in [a_i t^{-1},b_i t^{-1}]} e^{-Ct^2 x^2/2} - Bt^{1-r}\\
& \ge h\prod_{i=1}^r (t^{-1}(b_i-a_i)-1) e^{-Cb_i^2/2} > A_0t^{-r} - Bt^{1-r}
\end{align*}
for all sufficiently small $t$ if $A_0< he^{-C\sum_i b_i^2/2} \prod_i (b_i-a_i)$.
Setting $A = A_0/2$ and assuming $t \in (0,A_0/2B)$, we get the desired lower bound.

\end{proof}

We now prove the main theorem.

\begin{proof}
Using a basis for $\R^r = X\otimes \R$ consisting of the fundamental weights of the universal cover $\tilde G^c$ of $G^c$, we can identify $\N^r$ with the set of dominant weights of $\tilde G^c$
and $M = \delta+\Lambda$ with a subsemigroup of $\N^r$ which spans $\R^r$.
We have
$$b_n^{G,V} = \sum_{\lambda\in M} c_\lambda^{G,V^{\otimes n}}.$$
Fix $N>r+u$.
Setting $t = 1/\sqrt n$ and defining $Q$ by \eqref{Edgeworth}, Proposition~\ref{c_lambda estimate} implies
$$c_\lambda^{G,V^{\otimes n}} (\dim V)^{-n} =  n^{-r/2-u/2}e^{-Q(n^{-1/2}\lambda)}P(n^{-1/2},n^{-1/2} \lambda) + o(n^{-N}).$$
As $N>r+u/2$, the sum of the error terms over $\N^r$ is $o(n^{-u/2})$.
By Proposition~\ref{AB bounds}, the sum of the main term is between $A n^{-u/2}$ and $B n^{-u/2}$, so we are done.
\end{proof}

\end{document}